\numberwithin{equation}{section}
\newtheorem{theorem}{Theorem}[section]
\newtheorem{lemma}[theorem]{Lemma}
\theoremstyle{definition}
\newtheorem{example}[theorem]{Example}
\theoremstyle{remark}
\renewcommand{\d}{{\mathrm d}}
\newcommand{\R}{\mathbb{R}}
\newcommand{\N}{\mathbb{N}}
\newcommand{\diam}{\mathrm{diam}}
\newcommand{\dist}{\mathrm{dist}}
\begin{document}

\title[Dimension of boundaries of extension domains]
{Dimension estimates for the boundary of\\ planar Sobolev extension domains}
\author{Danka Lu\v{c}i\'c}
\author{Tapio Rajala}
\author{Jyrki Takanen}

\address{University of Jyvaskyla\\
         Department of Mathematics and Statistics \\
         P.O. Box 35 (MaD) \\
         FI-40014 University of Jyvaskyla \\
         Finland}
\email{danka.d.lucic@jyu.fi}
\email{tapio.m.rajala@jyu.fi}
\email{jyrki.j.takanen@jyu.fi}

\thanks{All authors partially supported by the Academy of Finland, 
 project 314789.}
\subjclass[2000]{Primary 46E35, 28A75}
\keywords{}
\date{\today}

%%%%%%%%%%%%%%%%%%%%%%%%%%%%%%%%%%%%%%%%%%%%%%%%%%%%%%%%%%%%%%%%%%%%%

\begin{abstract}
 We prove an asymptotically sharp dimension upper-bound for the boundary of bounded simply-connected planar Sobolev $W^{1,p}$-extension domains via the weak mean porosity of the boundary. The sharpness of our estimate is shown by examples.
\end{abstract}

\maketitle
\tableofcontents
\section{Introduction}

A set is porous if it has holes arbitrarily close to any point, and those holes have diameter comparable to the distance to the point. It is easy to see that porous sets in $\R^d$ have zero Lebesgue measure. 
If the porosity of the set $A\subset \R^d$ is stronger, in the sense that 
\[
\textrm{por}(A) \coloneqq \inf_{x\in A}\liminf_{r \searrow 0}\textrm{por}(A,x,r) > 0,
\]
where we denote the maximal size of a hole of the set $A \subset \R^d$ at $x \in \R^d$ and of scale $r > 0$ by
\[
\textrm{por}(A,x,r) \coloneqq \sup\{\alpha \ge 0\,:\, \text{there exists }y\in \R^d\text{ such that } B(y,\alpha r) \subset B(x,r)\setminus A\},
\]
then the Hausdorff dimension of $A$ is strictly less than $d$.  It was shown by Mattila  \cite{M1988} that as $\textrm{por}(A)$ gets closer to its maximal value $\frac12$, the dimension upper-bound for $A$ goes to $d - 1$. The sharp asymptotic behaviour when $\textrm{por}(A)\to \frac12$ was then established by Salli in  \cite{S1991}.
Later, several variants of porosity have been considered. For example, in a variant of porosity called $k$-porosity, one looks at $k$ holes in orthogonal directions, instead of just one, see \cite{JJKS2005,KS2011}. For it, the dimension upper-bound approaches $d-k$ as the porosity goes to its maximal value.

In the present paper we are  interested in the asymptotic behaviour of the dimension upper-bound when $\textrm{por}(A)\to 0$. In this case, for the usual porosity defined above we have the sharp upper-bound
\[
\dim_\mathcal{H}(A) \le d - c\,\textrm{por}(A)^d,
\]
for some constant $c$ depending on the dimension, see for instance \cite{MV1987}. However, sometimes we are in a setting where the porosity condition is not satisfied in the exact form as stated above, but almost.
One such instance is the study of growth conditions on the hyperbolic metric, which imply the existence of holes only in a portion of the scales, but not all scales.
Motivated by this, Koskela and Rohde introduced a version of porosity called \emph{mean porosity} and proved a sharp dimension upper bound for mean porous sets \cite{Koskela_Rohde_97} (see also the estimates by Beliaev and Smirnov \cite{BS2002} that deal also with a generalization of Salli's result). 

Our aim in this paper is to show sharp dimension bounds for boundaries of Sobolev extension domains.  For obtaining these, even the mean porosity of Koskela and Rohde is not flexible enough, because we might have many holes in a more sparse set of scales.
Therefore, we use a variant of mean porosity introduced by Nieminen in \cite{N2006}, called \emph{weak mean porosity} (see Section \ref{sec:poro} for the definition).
\medskip

Recall that
a domain $\Omega \subset \R^d$ is called a Sobolev $W^{1,p}$-extension domain, if there exists a constant $C \in (1,\infty)$ so that for every $f \in W^{1,p}(\Omega)$
there exists $F \in W^{1,p}(\R^d)$ so that $F|_\Omega = f$ and $\|F \|_{W^{1,p}(\R^d)} \le C\|f \|_{W^{1,p}(\Omega)}$. When $p>1$, the operator $f \mapsto F$ can always be assumed to be linear \cite{HKT2008}. In \cite{sh2010} and \cite{KRZ15}, bounded simply-connected Sobolev extension domains  $\Omega \subset \R^2$ were characterized by a curve condition, which for the range $1 < p < 2$ is the following: There exists a constant $C > 1$ such that for every $z_1,z_2 \in \R^2 \setminus {\Omega}$ there exists a curve  $\gamma \subset \R^2 \setminus {\Omega}$ connecting $z_1$ and $z_2$ and satisfying
 \begin{equation}\label{eq:curve_condition_intro}
  \int_{\gamma} \dist(z,\partial \Omega)^{1-p}\,{\rm d}s(z) \leq C\|z_1-z_2\|^{2-p}.
 \end{equation}
 We give an upper bound on the Hausdorff dimension $\dim_{\mathcal H}$ of the boundary of $\Omega$ in terms of the constant $C$ in \eqref{eq:curve_condition_intro}. This is done by showing the weak mean porosity of the boundary in Theorem \ref{thm:main_thm} and by combining it with the dimension estimate proven by Nieminen (Theorem \ref{thm:dim_estim}). The result we obtain is the following.
 \begin{theorem}\label{thm:main_intro}
 There exists a universal constant $M > 0$ such that for every  bounded simply-connected domain $\Omega \subset \R^2$ satisfying the curve condition \eqref{eq:curve_condition_intro} with some
 $ C \in (1,\infty)$  the following holds:
  \begin{equation}\label{eq:dim_bound_intro}
 \dim_{\mathcal H}(\partial\Omega) \leq 2-\frac{M}{C}.
 \end{equation}
 \end{theorem}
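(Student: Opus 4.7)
The plan is to derive Theorem \ref{thm:main_intro} from the two ingredients announced in the introduction: the weak mean porosity of $\partial\Omega$, with a porosity density controlled by $C$, will follow from Theorem \ref{thm:main_thm}, and the dimension estimate in Theorem \ref{thm:dim_estim} will then immediately yield \eqref{eq:dim_bound_intro}. The linear appearance of $C$ in the curve condition \eqref{eq:curve_condition_intro} is what forces the $1/C$ in the final bound; the task is really to prove the weak mean porosity with the correct linear scaling.

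To establish weak mean porosity at a fixed $x \in \partial\Omega$, I would first note that exterior points $z \in \R^2 \setminus \overline{\Omega}$ must exist arbitrarily close to $x$, for otherwise $B(x, \varepsilon) \subset \overline{\Omega}$ for some $\varepsilon > 0$ and one could not connect a nearby boundary point to a far exterior point by a curve in $\R^2 \setminus \Omega$ with finite weighted integral, violating \eqref{eq:curve_condition_intro}. Then, working with dyadic scales $r_k = 2^{-k}$, I would call a scale $k$ \emph{bad} when the annulus $B(x, 2r_k) \setminus B(x, r_k)$ contains no ball of radius $\alpha r_k$ inside $\R^2 \setminus \overline{\Omega}$, for a threshold $\alpha$ to be calibrated. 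For each large $n$, fix a reference point $z_\infty$ at unit distance from $x$ and an exterior point $z_n$ with $|z_n - x| \asymp r_n$, and apply the curve condition to obtain a curve $\gamma_n \subset \R^2 \setminus \Omega$ joining them with
\[
\int_{\gamma_n} \dist(z, \partial\Omega)^{1-p}\, \d s(z) \le C\, |z_n - z_\infty|^{2-p}.
\]
The curve $\gamma_n$ traverses every intermediate annulus, and at each bad scale $k \le n$ the crossing portion has length at least $r_k$ while being forced to remain within distance $\alpha r_k$ of $\partial\Omega$; its contribution to the integral is therefore at least $\alpha^{1-p} r_k^{2-p}$. Summing over the bad scales yields
\[
\alpha^{1-p} \sum_{k \in \mathrm{Bad}(x,n)} r_k^{2-p} \;\lesssim\; C,
\]
which is exactly the sort of weighted bound on thin scales that rephrases as Nieminen's weak mean porosity, with the relevant porosity parameter of order $1/C$ after a judicious choice of $\alpha$.

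Plugging the weak mean porosity into Theorem \ref{thm:dim_estim} then produces $\dim_\H(\partial\Omega) \le 2 - M/C$ with a universal constant $M$. I expect the main obstacle to be the calibration of $\alpha$ together with the bookkeeping that converts the weighted bad-scale bound into Nieminen's precise formulation of weak mean porosity; the constants and the interplay between the hole size $\alpha$ and the density $\sigma$ have to line up so that the product governing the dimension loss is of order $1/C$ uniformly in the Sobolev exponent $p \in (1,2)$. A secondary technical point is to quantify the first step above so that exterior points are available at \emph{every} dyadic scale near $x$, not just at some sequence of scales, so that the curve-condition argument can be applied to $z_n$ for every $n$.
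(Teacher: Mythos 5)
Your overall strategy (weak mean porosity of $\partial\Omega$ combined with Theorem \ref{thm:dim_estim}) is the same as the paper's, but the mechanism you propose for extracting porosity from the curve condition has a fatal flaw. You apply \eqref{eq:curve_condition_intro} to a single long curve $\gamma_n$ from $z_n$ (at distance $\asymp 2^{-n}$ from $x$) to a fixed $z_\infty$, and sum the contributions $\alpha^{1-p}r_k^{2-p}$ of the bad annuli against the budget $C\|z_n-z_\infty\|^{2-p}\asymp C$. Since $\sum_k r_k^{2-p}=\sum_k 2^{-k(2-p)}\le c/(2-p)<\infty$, this inequality carries essentially no information about the \emph{density} of bad scales: the scales $k\in[n/2,n]$ together contribute at most $\alpha^{1-p}2^{-(2-p)n/2}c/(2-p)\to 0$ as $n\to\infty$, so all of them may be bad without violating your bound, and weak mean porosity (which requires more than half of the first $j$ scales to be good for all large $j$) cannot follow. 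The paper avoids this by applying the curve condition \emph{separately at each scale} $k$ to a pair of points $w_i,w_{i+1}\in\R^2\setminus\Omega$ that lie within distance $\lesssim\varepsilon 2^{-k}$ of each other but, by a topological (winding-number) argument, cannot be joined in $\R^2\setminus\Omega$ without leaving $B(w_i,2^{-k-3})$; then the right-hand side $C(\varepsilon 2^{-k})^{2-p}$ is small enough, relative to the left-hand side, to force the minimizing curve to spend length $\gtrsim 2^{-k}$ inside Whitney cubes of $\R^2\setminus\overline{\Omega}$ of side $\gtrsim\varepsilon 2^{-k}$, which produce the holes. Your choice of a distant reference point $z_\infty$ makes the right-hand side far too large to be useful at small scales.

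A second gap: to obtain the linear loss $M/C$ from Theorem \ref{thm:dim_estim} one needs, in each good annulus, $\lambda(k)\asymp\varepsilon^{-1}\asymp C$ \emph{disjoint} cubes of side $\asymp\varepsilon 2^{-k}$ --- this is exactly what the exponent $\varepsilon^{d-1}=\varepsilon$ in the dimension estimate encodes. Your sketch only produces (or tests the absence of) a single hole of radius $\alpha r_k$ per annulus; a single hole of relative size $\asymp 1/C$ per good scale would yield only $\dim_{\mathcal H}\le 2-M/C^{2}$. Relatedly, by defining bad scales through the absence of holes in $\R^2\setminus\overline{\Omega}$ only, you are essentially following the John-condition route, which the paper shows (see \eqref{eq:dim_bound_nonsharp} and Theorem \ref{thm:examples}) cannot give a bound better than $2-M((2-p)C)^{-1/(2-p)}$. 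The sharp bound requires holes on both sides of $\partial\Omega$ (the paper's Case 1, which works in $\R^2\setminus\partial\Omega$) together with the double Whitney decomposition, Lemma \ref{lma:shortcuts} and the counting bound \eqref{eq:W_Qbound}, to convert each large Whitney cube met by the curve into $\asymp\varepsilon^{-1}$ disjoint subcubes of the correct size inside the annulus.
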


In Section \ref{sec:examples}, we show that Theorem \ref{thm:main_intro} is sharp in the sense that there exists another constant $M' > 0$ so that for every $p\in(1,2)$ and $C \in (M'/(2-p),\infty)$ there exists a Jordan domain $\Omega_C \subset \R^2$ satisfying \eqref{eq:curve_condition_intro} with
 \[
   \dim_{\mathcal H}(\partial\Omega_C) \ge 2-\frac{M'}{(2-p)C}.
 \]
 Notice, however, the factor $\frac{1}{2-p}$ difference between Theorem \ref{thm:main_intro} and the examples.
 The curve condition \eqref{eq:curve_condition_intro} implies that $\mathbb R^2 \setminus \Omega$ is quasi-convex. Consequently, the domain $\Omega$ is a $J$-John domain \cite{NV1991}, meaning that there exists a constant $J > 0$ and a point $x_0 \in \Omega$  so that for every $x \in \Omega$ there exists a unit speed curve $\gamma\colon [0,\ell(\gamma)]\to \Omega$ such that $\gamma(0) = x$, $\gamma(\ell(\gamma)) = x_0$, and 
 \begin{equation}\label{eq:John}
 \dist(\gamma(t), \partial \Omega) \ge Jt \qquad \text{for all }t \in [0,\ell(\gamma)].
 \end{equation}
 Koskela and Rohde showed that the boundary of a $J$-John domain $\Omega \subset \R^2$ has the dimension bound
 \begin{equation}\label{eq:John_dim}
   \dim_{\mathcal H}(\partial\Omega) \leq 2-cJ,
 \end{equation}
 for some constant $c>0$. In Section \ref{sec:examples} we show that the bound \eqref{eq:John_dim} is also sharp.

 In Section \ref{sec:examples} we also show that from the curve condition, via the John condition and  the mean porosity of Koskela and Rohde \cite{Koskela_Rohde_97}, it is not possible to get a better bound than
  \begin{equation}\label{eq:dim_bound_nonsharp}
  \dim_{\mathcal H}(\partial\Omega) \leq 2 - \frac{M}{((2-p)C)^{1/(2-p)}}.
 \end{equation}
  A reason why the John condition does not give the sharper bound is that using it we consider holes only in the domain (or its complement), whereas by going from the curve condition directly to weak mean porosity, we can use holes on both sides of the boundary.
  
\section{Preliminaries}\label{sec:preli}

Let us start by introducing some notation and preliminary results.
By a \emph{cube} in $\R^d$ we mean an open cube whose sides are 
parallel to the axes in $\R^d$. The side-length of a cube $Q\subset\R^d$ 
will be denoted by $\ell(Q)$. 
By a dyadic cube $Q$ we mean that it is of the form
 \[
 Q = (i_12^{-k}, (i_1+1)2^{-k}) \times (i_22^{-k}, (i_2+1)2^{-k}) \times \cdots \times (i_d2^{-k}, (i_d+1)2^{-k})
 \]
 for some $k, i_1, i_2, \dots, i_d \in \mathbb Z$. We denote the set of dyadic cubes in $\R^d$ by $\mathcal D_d$.
Given an open non-empty set $U\subset \R^d$ that is not the whole $\R^d$, we denote by $\mathcal{W}_U$ the 
\emph{Whitney decomposition} of $U$, 
defined as
\[
 \mathcal{W}_U = \{Q \in \widetilde{\mathcal{W}}_U \,:\, \text{if }Q' \in \widetilde{\mathcal{W}}_U \text{ with }Q'\cap Q \ne \emptyset, \text{ then }Q' \subset Q\},
\]
where
\[
\widetilde{\mathcal{W}}_U = \{Q \in \mathcal D_d \,:\, \text{if }Q' \in \mathcal D_d \text{ with }\overline{Q} \cap \overline{Q}'\ne \emptyset \text{ and }\ell(Q) = \ell(Q') \text{ then }Q' \subset U\}.
\]
See Figure \ref{fig:cubes} for an illustration of the Whitney decomposition.
 %can be found, e.g., \cite{Stein}.
 \begin{figure}
    \includegraphics[width=0.6\columnwidth]{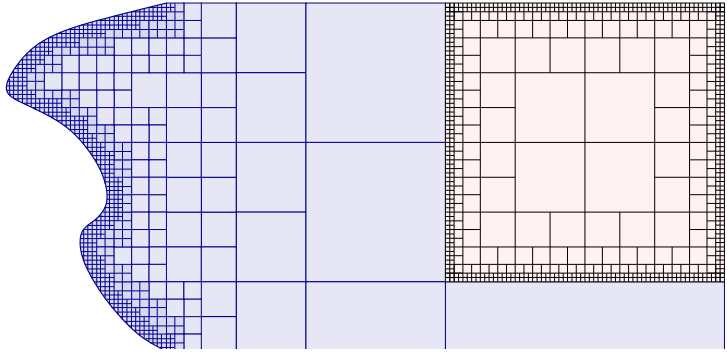}
    \caption{In our proof, we will use a double dyadic decomposition similar to the one used in \cite{Koskela_Rohde_97}. A domain is first decomposed into its Whitney cubes. Then each Whitney cube is decomposed into its own Whitney cubes, as illustrated here only for the largest cube in the first decomposition.}
    \label{fig:cubes}
\end{figure}
It readily follows that $\mathcal W_U$ is a collection of pairwise disjoint dyadic cubes $Q$ so that
$U=\bigcup_{Q\in \mathcal W_U}\overline{Q}$.
%for every cube $Q$, we have $\ell(Q)=2^{-k}$ for some $k\in \mathbb{Z}$.
Moreover, the following condition is satisfied by each $Q\in \mathcal W_U$:
\begin{equation}\label{eq:Whitney_property}
%\sqrt{d}\,\ell(Q)={\rm diam}(Q)
\ell(Q)\leq {\rm dist}(Q,\partial U)\leq 4\,{\rm diam}(Q)
=4\sqrt{d}\,\ell(Q).
\end{equation}
Moreover, if $Q,Q' \in \mathcal{W}_U$ with $\overline{Q} \cap \overline{Q}' \ne \emptyset$, then
\begin{equation}\label{eq:factor2}
 \frac12 \le \frac{\ell(Q)}{\ell(Q')} \le 2.
\end{equation}

In the specific case where we take the Whitney decomposition of a dyadic cube $Q \in \mathcal D_d$, we have
\begin{equation}\label{eq:W_Qdef}
 \mathcal W_Q = \left\{Q' \subset Q \,:\, Q' \text { dyadic cube with }\ell(Q') = \dist(Q',\partial Q) \right\}.
\end{equation}
See again Figure \ref{fig:cubes} for an illustration.
It is then easy to check that 
\begin{equation}\label{eq:W_Qbound}
\#\left\{Q'\in \mathcal W_Q:\, 
\ell(Q')=2^{-j}\ell(Q)\right\}\geq 2^{(j-1)(d-1)}\quad \text{ holds for every }j \ge 2.
\end{equation}

%\begin{lemma}\label{lem:W_Q}
%Let $Q$ be a cube in $\R^d$ such that
%$\ell(Q)\geq 2^{-k}$ for some $k\in \N$.
%Then 
%%there exists a constant $\bar c$, depending only on $d$, such that 
%\begin{equation}
%\#\big\{Q'\in \mathcal W_Q:\, 
%\ell(Q')=2^{-j-k}\big\}\geq 2^{j(d-1)}\quad \text{ holds for every }j\in \N.
%\end{equation}
%\end{lemma}
%
%
%\begin{proof}
%{\color{red}TODO.}
%\end{proof}
%
%
\medskip
Given any ball $B\subset \R^d$ and any $r>0$, we denote by $rB$
the ball having the same center as $B$ and the radius $r$ times that of $B$.
The ball of radius $r>0$, centered in $x\in \R^d$ is denoted by $B(x,r)$,
while by $B(E,r)$ we denote the $r$-neighbourhood 
of a given set $E\subset\R^d$.
\smallskip

Recall that the \emph{Hausdorff dimension} of a set $E\subset\R^d$ is defined by 
\begin{equation}
{\rm dim}_{\mathcal H}(E)\coloneqq \inf \{s>0: \mathcal H^s(E)=0\}
=\sup\{s>0: \mathcal H^s(E)=+\infty\},
\end{equation}
where $\mathcal H^s$ stands for $s$-dimensional Hausdorff measure in $\R^d$.
%We shall also need the notion of \emph{upper Minkowski dimension}
%of a set $E\subset \R^d$
%that is given by 
%\begin{equation}\label{eq:up_Minkowski_dim}
%\overline{{\rm dim}}_{\mathcal M}(E)\coloneqq \limsup_{\delta\to 0}
%\frac{\log\big(N(\delta, E)\big)}{-\log(\delta)},
%\end{equation}
%where $N(\delta,E)$ denotes the cardinality of the maximal $\delta$-separated set 
%in $E$. Whenever the limit in \eqref{eq:up_Minkowski_dim} exists, 
%it is referred to as the \emph{Minkowski dimension} of the set $E$. 
%We also recall that $\dim_{\mathcal H}(E)\leq \overline{\dim}_{\mathcal M}(E)$ 
%for every $E\subset \R^d$.
%
% 

\subsection{Weakly mean porous sets}\label{sec:poro}

In the present subsection, we recall the concept of
weak mean porosity introduced in \cite{N2006}. The weak mean porosity is a variant of mean porosity introduced in \cite{Koskela_Rohde_97}.
%boundary of domains
%satisfying the curve condition presented in Section \ref{sec:dimension_estimate}.

Let $E \subset\R^d$ be a compact set. Let $\alpha \colon ]0,1[ \to ]0,1[$ be a continuous function such that $\alpha(t)/t$ is increasing in $t$, and let $\lambda \colon \mathbb Z^+ \to \mathbb R$ be a function. Let $\mathcal D$ be a disjointed collection of open cubes in $\R^d\setminus E$. Define
\[
 \chi_k^\mathcal{D}(x) = \begin{cases}
                          1, &\text{if there exist at least }\lambda(k)\text{ cubes }Q \in \mathcal{D}\text{ with }Q \subset A_k(x) \text{ and }\ell(Q) \ge \alpha(2^{-k}),\\
                          0, &\text{otherwise},
                         \end{cases}
\]
where $A_k(x)\coloneqq B(x,2^{-k})\setminus B(x,2^{-k-1})$. Let
\[
 S_j^\mathcal{D}(x) = \sum_{k=1}^j \chi_k^\mathcal{D}(x).
\]
We say that $E$ is \emph{weakly mean porous with parameters $(\alpha,\lambda)$}, if there exists a collection $\mathcal D$ and $j_0 \in \mathbb Z^+$ such that
\[
 \frac{S_j^\mathcal{D}(x)}{j} > \frac12
\]
for all $x \in E$ and for all $j \ge j_0$.

We will apply weak mean porosity in the case 
\begin{equation}\label{eq:lambdaalpha}
 \lambda(k) = c \varepsilon^{-1}\qquad \text{and} \qquad\alpha(t) = \varepsilon t, 
\end{equation}
for some $\varepsilon \in ]0,1[$ and a fixed constant $c > 0$.
In this case, we have the following dimension estimate as a direct corollary of \cite[Theorem 3.3]{N2006}.
\begin{theorem}\label{thm:dim_estim}
 There exists a constant $C(d,c)> 0$ such that any weakly mean porous set $E \subset \R^d$ with parameters $(\alpha,\lambda)$ defined in \eqref{eq:lambdaalpha} satisfies
 \[
  \dim_{\mathcal{H}}(E) \le d - C(d,c)\varepsilon^{d-1}.
 \]
\end{theorem}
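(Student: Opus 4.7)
The theorem is stated as a direct corollary of the general dimension estimate for weakly mean porous sets proved by Nieminen \cite[Theorem~3.3]{N2006}, so the plan is to invoke that theorem with the choice of parameters in \eqref{eq:lambdaalpha} and verify that the resulting bound simplifies to $d - C(d,c)\varepsilon^{d-1}$.

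Nieminen's underlying argument is a Moran-type counting scheme on dyadic covers of $E$. For each $x \in E$ and each scale $k$ that is good at $x$, the annulus $A_k(x)$ contains $\lambda(k)$ disjoint cubes from $\mathcal D$ of side-length at least $\alpha(2^{-k})$; hence at such a scale these cubes can be removed from any refinement of a cover of $E$ by pieces of side comparable to $\alpha(2^{-k})$. The fraction of $A_k(x)$ deleted in this way is at least a dimensional constant times
\[
 \lambda(k)\left(\frac{\alpha(2^{-k})}{2^{-k}}\right)^{d}.
\]
Because every $x \in E$ has more than $j/2$ good scales among the first $j$ for all $j$ large enough, iterating the estimate over $j$ scales and choosing the smallest exponent $s$ for which the $s$-dimensional Hausdorff pre-measure of the surviving cover stays bounded yields a dimension upper bound for $E$ expressed as an explicit function of $\alpha$ and $\lambda$.

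Substituting $\lambda(k) = c\varepsilon^{-1}$ and $\alpha(2^{-k})/2^{-k} = \varepsilon$, the displayed fraction becomes $c\varepsilon^{d-1}$, which is \emph{constant} in $k$. The multiplicative loss per good scale is therefore $(1 - c'\varepsilon^{d-1})$, and, using $-\log(1 - c'\varepsilon^{d-1}) \sim c'\varepsilon^{d-1}$ for small $\varepsilon$, the optimization produces $\dim_{\mathcal H}(E) \le d - C(d,c)\varepsilon^{d-1}$ with a constant depending only on $d$ and $c$.

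The main obstacle, already taken care of in \cite{N2006}, is the geometric bookkeeping: showing that the cubes of $\mathcal D$ --- which are only required to be disjoint cubes of side at least $\alpha(2^{-k})$ sitting inside $A_k(x)$, with no alignment to any dyadic grid --- nevertheless account for a definite fraction of the refinement, and that the refinement scale can be chosen so as not to introduce logarithmic corrections to the exponent of $\varepsilon$. Once \cite[Theorem~3.3]{N2006} is invoked as a black box, the statement reduces to the one-line substitution above.
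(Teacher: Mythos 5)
The paper offers no independent proof of this statement: it is cited verbatim as a direct corollary of Nieminen's Theorem 3.3, exactly as you do, and your substitution $\lambda(k)\bigl(\alpha(2^{-k})/2^{-k}\bigr)^{d}=c\varepsilon^{d-1}$ is the same one-line computation implicit in the paper. Your sketch of the Moran-type counting behind Nieminen's result is a correct (if optional) elaboration, so the proposal matches the paper's approach.
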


\section{Weak mean porosity of the boundary of Sobolev extension domains}
\label{sec:dimension_estimate}
In this section we will show that
the boundary of a planar bounded simply-connected $W^{1,p}$-extension domain 
(with $1<p<2$) is weakly mean porous with the parameters depending on the constant $C$ appearing in the \emph{curve condition}
\eqref{eq:curve_condition} that characterises $W^{1,p}$-extension domains 
(cf.\ Theorem \ref{thm:curve_condition} below).
\medskip

The following result has been proven in \cite{KRZ15}:
\begin{theorem}\label{thm:curve_condition}
Let $1<p<2$ and let $\Omega\subset \R^2$ be a bounded simply-connected domain.
Then, $\Omega$ is a $W^{1,p}$-extension domain if and only if there exists 
a constant $C=C(\Omega, p)>0$ such that every 
$z_1,z_2\in \R^2\setminus \Omega$ can be joined by a rectifiable curve 
$\gamma\in \R^2\setminus \Omega$ satisfying 
\begin{equation}\label{eq:curve_condition}
\int_{\gamma} \dist(z,\partial \Omega)^{1-p}\,{\rm d}s(z) \leq C\|z_1-z_2\|^{2-p}.
\end{equation}
\end{theorem}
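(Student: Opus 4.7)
The plan is to prove both implications by combining a Whitney-type extension construction with $p$-modulus arguments for curve families in $\R^2\setminus \Omega$.

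For sufficiency (curve condition implies extension), I would build a Jones-type extension operator. Decompose $\R^2 \setminus \overline{\Omega}$ into Whitney cubes, and to each such cube $Q$ associate a ``reflected'' Whitney cube $Q^\ast \subset \Omega$ of comparable side length and at comparable distance from the nearest boundary point. Define $F$ on $Q$ as a smoothed average of $f$ over $Q^\ast$, glued together by a Lipschitz partition of unity. The $L^p$-bound for $F$ is routine. For $\nabla F$, the differences between averages over adjacent cubes telescope along Whitney chains, and a H\"older step converts the resulting discrete sums into weighted line integrals of $|\nabla f|$ with weight $\dist(z,\partial \Omega)^{1-p}$ along suitable curves $\gamma \subset \R^2 \setminus \Omega$. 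The curve condition \eqref{eq:curve_condition} controls precisely such weighted integrals, and a covering argument assembles the local estimates into the global Sobolev norm bound for $F$.

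For necessity (extension implies curve condition), fix $z_1,z_2 \in \R^2 \setminus \Omega$ and choose a test function $f$ on $\Omega$ whose behaviour near $\partial\Omega$ forces its extension to ``separate'' $z_1$ and $z_2$ in $\R^2\setminus\Omega$. A natural candidate is a smooth function built from $\dist(\cdot, \partial \Omega)$ together with a cutoff taking different values near $z_1$ and $z_2$, calibrated so that $\|f\|_{W^{1,p}(\Omega)}$ scales with a clean power of $\|z_1-z_2\|$. Applying the extension, then invoking the duality between $W^{1,p}$ functions and $p$-modulus of the family of curves in $\R^2\setminus \Omega$ joining $z_1$ and $z_2$, and finally a Fuglede-type selection, yields a single curve $\gamma$ on which $\int_\gamma \dist(z,\partial\Omega)^{1-p}\,ds(z) \le C \|z_1-z_2\|^{2-p}$.

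The main obstacle is the necessity direction: the test function $f$ must be tailored so that the weight $\dist^{1-p}$ and the scaling $\|z_1-z_2\|^{2-p}$ appear simultaneously with the correct constants; this is the step in which the restriction $1<p<2$ enters essentially, since the negative exponent $1-p$ forces $\gamma$ to stay at a definite distance from $\partial \Omega$, while $2-p$ on the right is what comes out of the dimensional scaling of $\|f\|_{W^{1,p}(\Omega)}$. Simple-connectedness enters through conformal uniformization of $\Omega$, which reduces curve-modulus estimates in $\R^2\setminus \Omega$ to tractable boundary-behaviour questions for a Riemann map. A secondary subtlety is tracking the dependence of $C$ in \eqref{eq:curve_condition} on the extension constant in a form that yields the stated quantitative characterization.
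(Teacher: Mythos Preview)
The paper does not contain a proof of this theorem at all: it is quoted verbatim as a result of \cite{KRZ15} (see also \cite{sh2010}) and then used as a black box in Section~\ref{sec:dimension_estimate}. So there is no ``paper's own proof'' to compare against; the relevant comparison is with the argument in \cite{KRZ15}, which is a long paper in its own right.

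Your outline is at roughly the right level of abstraction for the sufficiency direction, but there is a genuine mismatch in how you propose to use the curve condition. You say the telescoping differences of Whitney averages turn into weighted line integrals of $|\nabla f|$ along curves $\gamma\subset\R^2\setminus\Omega$; but $|\nabla f|$ lives in $\Omega$, while the curves in \eqref{eq:curve_condition} lie in the complement. In the actual argument the curve condition is not used to integrate $|\nabla f|$ along complementary curves; rather, it controls the geometry of chains of Whitney cubes in $\R^2\setminus\overline{\Omega}$ (and of their reflected partners in $\Omega$), and the weight $\dist(\cdot,\partial\Omega)^{1-p}$ appears because one must sum $\ell(Q)^{2-p}$ over those chains. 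Getting from \eqref{eq:curve_condition} to the needed chain estimates is nontrivial and is one of the main technical contributions of \cite{KRZ15}.

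For necessity, your modulus/test-function sketch is plausible in spirit, but in \cite{KRZ15} the argument proceeds quite differently: it passes through a duality between $W^{1,p}$-extendability of $\Omega$ and $W^{1,q}$-extendability of the complementary domain (with $\frac1p+\frac1q=1$), together with an intrinsic characterization of the latter. Conformal uniformization of $\Omega$, which you propose to use, does not play the role you assign it; simple-connectedness is used more topologically (to guarantee that the complement is connected and that separating arcs behave well). As written, your plan for necessity is too vague to be assessed as correct: the ``tailored test function'' step is precisely where all the difficulty lies, and nothing in the sketch indicates how to produce it.
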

Now we are ready to state our main result.

\begin{theorem}%[Dimension estimate for the boundary of planar $W^{1,p}$-extension domains]
\label{thm:main_thm}
There exist universal constants $C',C''>0$ so that the following holds.
Let $1<p<2$ and let $\Omega\subset \R^2$ be a bounded simply-connected 
$W^{1,p}$-extension domain. Let $C$ be the constant from the curve condition \eqref{eq:curve_condition}.
Then, $\partial \Omega$ is weakly mean porous with parameters $(\alpha,\lambda)$, where $\lambda(k) = C'C$ and $\alpha(t) = \frac{C''}{C}t$.
% Consequently,
%\begin{equation}\label{eq:main_dimension_bound}
%%\dim_{\mathcal H}(E)
%\overline{{\rm dim}}_{\mathcal M}(\partial\Omega) \leq 2-C'''C^{-1},
%\end{equation} 
%for some universal constant $C'''>0$.
\end{theorem}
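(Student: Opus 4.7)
I would define the disjoint family via a double Whitney decomposition,
\[
 \mathcal{D} \coloneqq \bigcup_{Q \in \mathcal{W}_\Omega \cup \mathcal{W}_{\R^2 \setminus \overline\Omega}} \mathcal{W}_Q,
\]
a collection of pairwise disjoint open dyadic cubes lying in $\R^2 \setminus \partial\Omega$. Fix $x \in \partial\Omega$. I would call a scale $k$ \emph{good} if some parent Whitney cube $Q \in \mathcal{W}_\Omega \cup \mathcal{W}_{\R^2 \setminus \overline\Omega}$ with $\ell(Q) \geq c_0 2^{-k}$, where $c_0 := 2 C' C''$, meets a narrow sub-annulus $A_k^\star(x) \Subset A_k(x)$. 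Then \eqref{eq:W_Qbound} applied to $Q$ produces at least $c_0 C/(2 C'') = C' C = \lambda(k)$ children of side $(C''/C) 2^{-k}$, and by taking $A_k^\star$ thin enough inside $A_k$ these children all lie in $A_k(x)$, hence $\chi_k^\mathcal{D}(x) = 1$.

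At a \emph{bad} scale every Whitney cube from either side meeting $A_k^\star(x)$ has side length strictly less than $c_0 2^{-k}$, so the Whitney property \eqref{eq:Whitney_property} forces $\dist(y, \partial\Omega) \leq c_1 2^{-k}$ on $A_k^\star(x)$, where $c_1$ is a fixed multiple of $c_0$. Consequently, any rectifiable curve $\gamma \subset \R^2 \setminus \Omega$ crossing $A_k^\star(x)$ contributes at least $c_1^{1-p} 2^{-k(2-p)}/8$ to the integral in \eqref{eq:curve_condition}. The endpoints needed to invoke \eqref{eq:curve_condition} are always available: the curve condition implies quasi-convexity — and hence connectedness and unboundedness — of $\R^2 \setminus \Omega$, so every sphere $\partial B(x, r)$ with $r$ smaller than $\diam(\Omega)$ must meet the open complement, and by a perturbation we may arrange the endpoints in $\R^2 \setminus \overline\Omega$.

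The final and principal step is to turn these per-scale lower bounds into a density estimate $|B \cap [1,j]| < j/2$ on the set $B$ of bad scales (once $j$ is large enough in terms of $C$). I would do this by applying \eqref{eq:curve_condition} to a family of witness pairs $(z_1^{(k)}, z_2^{(k)}) \in \R^2 \setminus \overline\Omega$, one per bad scale $k$, chosen so that each curve is forced to traverse $A_k^\star(x)$ and the resulting contributions \emph{add}. The main obstacle is the combinatorial accounting. A naive single-scale application of \eqref{eq:curve_condition} only yields $c_1^{1-p} \leq \mathrm{const}\cdot C$, i.e., $c_1 \geq \mathrm{const}\cdot C^{-1/(p-1)}$, which degenerates as $p \to 1$ and cannot produce the $p$-uniform threshold $c_1 \sim 1/C$ required by the statement. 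Likewise, applying \eqref{eq:curve_condition} to one global curve across all scales only controls the geometric sum $\sum_{k \in B} 2^{-k(2-p)}$, which does not translate into a density bound. Overcoming this obstruction requires arranging witnesses so that the cumulative cost scales with the number $|B \cap [1, j]|$ of bad scales rather than with a geometric sum in $k$; this is where the use of Whitney cubes on \emph{both} sides of $\partial\Omega$, together with the simple-connectedness of $\Omega$ (which forces the boundary pieces inside a bad annulus to be highly interleaved), should provide the extra leverage.
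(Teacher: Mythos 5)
Your setup (the double Whitney decomposition and the dichotomy between scales where a large Whitney cube already meets the annulus and scales where it does not) matches the paper's, but the proposal stops exactly at the decisive step, and the route you sketch for that step would not work. The paper does not need any density estimate on a set of bad scales: it proves $\chi_k^{\mathcal D}(x)=1$ for \emph{every} $k\ge k_0$, so $S_j^{\mathcal D}(x)/j=1$. The obstruction you flag --- that applying \eqref{eq:curve_condition} to two points at mutual distance $\sim 2^{-k}$ separated by the annulus yields only $c_1\gtrsim C^{-1/(p-1)}$ --- is avoided not by a multi-scale accounting of witness pairs, but by choosing the witness pair at mutual distance $\sim\varepsilon 2^{-k}$ with $\varepsilon\asymp 1/C$ fixed at the outset. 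Concretely, in the bad case one takes a maximal $\varepsilon 2^{-k+5}$-net $\{y_i\}$ on a single sphere $\partial B(x,r)$, picks $w_i\in B(y_i,\varepsilon 2^{-k+5})\setminus\Omega$, and uses a winding-number argument (this is where bounded simple connectivity enters) to find one adjacent pair $w_i,w_{i+1}$ that cannot be joined in $\R^2\setminus\Omega$ without leaving $B(w_i,2^{-k-3})$. The minimizing curve $\gamma$ for this pair then satisfies $\mathcal H^1(\gamma\cap B(w_i,2^{-k-3}))\ge 2^{-k-3}$, while \eqref{eq:curve_condition} bounds the length of the portion of $\gamma$ with $\dist(\cdot,\partial\Omega)\lesssim\varepsilon 2^{-k}$ by
\[
(\varepsilon 2^{-k})^{p-1}\cdot C(\varepsilon 2^{-k})^{2-p}\;=\;C\varepsilon\,2^{-k}\le 2^{-k-4},
\]
the exponents in $p$ cancelling --- this is exactly the $p$-uniformity you identified as missing. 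Hence at least half of the curve inside the small ball runs through Whitney cubes of side $\ge\varepsilon 2^{-k+2}$, and a shortcut lemma ($\mathcal H^1(\gamma\cap\overline Q)\le 10\,\ell(Q)$ for a minimizer, proved by comparing with the chord) converts this length into $\gtrsim\varepsilon^{-1}$ distinct parent cubes, each contributing a subcube of side exactly $\varepsilon 2^{-k}$ inside $A_k(x)$ via \eqref{eq:W_Qbound}.

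So the gap is concrete: the density argument over bad scales that your plan hinges on is not carried out, and, as you yourself observe, the per-scale contributions you can extract from far-apart endpoints do not assemble into a density bound. The missing ideas are (i) fixing $\varepsilon\asymp 1/C$ from the start, (ii) the topological selection of a \emph{nearby} pair of complement points forced to take a long detour, and (iii) the shortcut lemma turning curve length into a count of Whitney cubes. With these, every scale is good and no combinatorial accounting over scales is required.
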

In the proof of Theorem \ref{thm:main_thm}, we use the following result to relate the length of the curve $\gamma$ in \eqref{eq:curve_condition} to the diameter of cubes it intersects.

\begin{lemma}\label{lma:shortcuts}
 Let $1<p<2$, let $\Omega\subset \R^2$ be a bounded simply-connected 
$W^{1,p}$-extension domain and let $z_1,z_2 \in \R^2\setminus \Omega$. Then there exists a curve $\gamma$ connecting $z_1$ and $z_2$ in $\R^2 \setminus \Omega$ that minimizes
  \begin{equation}\label{eq:curve_min}
\int_{\gamma} \dist(z,\partial \Omega)^{1-p}\,{\rm d}s(z) 
\end{equation}
 and satisfies 
 \[
  \mathcal{H}^1(\gamma \cap \overline{Q}) \le 10\, \ell(Q)
 \]
 for every $Q \in \mathcal W_{\R^2 \setminus \overline{\Omega}}$.
\end{lemma}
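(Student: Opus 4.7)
The plan is to first produce a minimizer of \eqref{eq:curve_min} by the direct method of the calculus of variations, and then force the per-Whitney-cube length bound by a convex shortcut argument inside $\overline Q$.

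For existence, start from a minimizing sequence $(\gamma_n)_n$ of rectifiable curves from $z_1$ to $z_2$ in $\R^2\setminus\Omega$. Since $\Omega$ is bounded and $2-p>0$, any excursion of a curve out to Euclidean distance $R$ from $\Omega$ forces an integral cost of order $R^{2-p}\to\infty$, so the $\gamma_n$ may be taken to lie in a fixed compact set $K$. On $K$ the weight $\dist(\cdot,\partial\Omega)^{1-p}$ is uniformly bounded below, so the bounded integrals imply bounded lengths, and after reparametrization by constant speed on $[0,1]$ the family is equi-Lipschitz. An Arzelà--Ascoli subsequence converges uniformly to a Lipschitz curve $\gamma\subset\R^2\setminus\Omega$, and the lower semicontinuity of $\gamma\mapsto\int_\gamma\dist(\cdot,\partial\Omega)^{1-p}\,ds$ (using that the weight, extended as $+\infty$ on $\partial\Omega$, is lower semicontinuous) identifies $\gamma$ as a minimizer.

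Now fix $Q\in\mathcal W_{\R^2\setminus\overline\Omega}$. The Whitney property \eqref{eq:Whitney_property} together with the identity $\dist(z,\partial\Omega)=\dist(z,\partial\overline\Omega)$ for $z\in\R^2\setminus\overline\Omega$ yields the two-sided estimate $\ell(Q)\le\dist(z,\partial\Omega)\le 5\sqrt{2}\,\ell(Q)$ for every $z\in\overline Q$. Suppose toward contradiction that $\mathcal H^1(\gamma\cap\overline Q)>10\,\ell(Q)$, put $t_1\coloneqq\inf\{t:\gamma(t)\in\overline Q\}$ and $t_2\coloneqq\sup\{t:\gamma(t)\in\overline Q\}$, and define $\tilde\gamma$ by replacing $\gamma|_{[t_1,t_2]}$ with the line segment $\sigma$ from $\gamma(t_1)$ to $\gamma(t_2)$. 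Since $\overline Q$ is convex and $\overline Q\subset\R^2\setminus\overline\Omega\subseteq\R^2\setminus\Omega$, the segment $\sigma\subset\overline Q$ makes $\tilde\gamma$ admissible. Because $1-p<0$ makes the weight largest where $\dist$ is smallest,
\[
\int_{\gamma|_{[t_1,t_2]}}\!\dist(z,\partial\Omega)^{1-p}\,ds\ge (5\sqrt{2}\,\ell(Q))^{1-p}\,\mathcal H^1(\gamma\cap\overline Q)>10(5\sqrt{2})^{1-p}\ell(Q)^{2-p},
\]
while
\[
\int_{\sigma}\dist(z,\partial\Omega)^{1-p}\,ds\le\ell(Q)^{1-p}\cdot\sqrt{2}\,\ell(Q)=\sqrt{2}\,\ell(Q)^{2-p}.
\]
The elementary inequality $10(5\sqrt{2})^{1-p}>\sqrt{2}$ is equivalent to $p<2$, which is our standing hypothesis; hence $\tilde\gamma$ has strictly smaller integral than $\gamma$, contradicting minimality.

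The only delicate step is existence of the minimizer, specifically the lower semicontinuity of the weighted length when the weight blows up on $\partial\Omega$, so that the uniform limit cannot cheat by charging $\partial\Omega$ with positive length. The rest is essentially automatic: the Whitney property pins $\dist(\cdot,\partial\Omega)$ within a bounded ratio on each cube, and the negativity of $1-p$ turns a convex-hull shortcut into a strict energy decrease, giving the constant $10$ appearing in the statement.
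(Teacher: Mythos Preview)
Your proof is correct and follows essentially the same approach as the paper: both arguments replace $\gamma|_{[t_1,t_2]}$ by the straight segment $[\gamma(t_1),\gamma(t_2)]\subset\overline{Q}$ and compare the weighted lengths using the two-sided Whitney bound $\ell(Q)\le\dist(z,\partial\Omega)\le 5\sqrt{2}\,\ell(Q)$ on $\overline{Q}$. The only differences are cosmetic: the paper outsources existence of the minimizer to \cite[Lemma~2.17]{KRZ15} rather than sketching the direct method, and it writes the cube estimate as a direct chain of inequalities (obtaining $\mathcal{H}^1(\gamma\cap\overline{Q})\le\sqrt{2}\,(5\sqrt{2})^{p-1}\ell(Q)\le 10\,\ell(Q)$) instead of arguing by contradiction.
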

\begin{proof}
 The existence of a minimizer for \eqref{eq:curve_min} is standard and has been established in the proof of \cite[Lemma 2.17]{KRZ15}.
 Let $Q \in \mathcal W_{\R^2 \setminus \overline{\Omega}}$. Define
  $t_1 = \min\{t \,:\, \gamma(t) \in \overline{Q}\}$
 and $t_2 = \max\{t \,:\, \gamma(t) \in \overline{Q}\}$. 
 Then, by \eqref{eq:Whitney_property} and the minimality of $\gamma$,
 \begin{align*}
 \mathcal{H}^1(\gamma\cap \overline{Q})\left(5\sqrt{2}\ell(Q)\right)^{1-p} 
 & \le  \mathcal{H}^1(\gamma\cap \overline{Q})\left(\dist(Q,\partial\Omega)+\diam(Q)\right)^{1-p}\\
 &\le \int_{\gamma\cap \overline{Q}} \dist(z,\partial \Omega)^{1-p}\,{\rm d}s(z) \\
 &\le \int_{[\gamma(t_1),\gamma(t_2)]}\dist(z,\partial\Omega)^{1-p}\,{\rm d}s(z)\\
 & \le \diam(Q)\dist(Q,\partial\Omega)^{1-p} \le \sqrt{2}\ell(Q)^{2-p}.
 \end{align*}
 Thus, the claim holds.
\end{proof}

\begin{proof}[Proof of Theorem \ref{thm:main_thm}]
Without loss of generality, we may assume that %${\rm diam}(\Omega)\leq 2$ and
 $C \ge 1$. 
 Let $\varepsilon \coloneqq 2^{-m} \in (2^{-15}/C, 2^{-14}/C]$ with $m \in \mathbb Z$.
We start by constructing the collection $\mathcal D$ of cubes in $\mathbb R^2 \setminus \partial \Omega$.
We decompose every $Q\in \mathcal W_{\R^2\setminus \partial\Omega}$ 
into $\mathcal W_Q$ and enumerate
 $\mathcal W_Q = \{Q_i(Q)\}_{i\in \N}$. 
We will show that the family 
\[
\mathcal D \coloneqq \big\{Q_i(Q):\, i\in \N,\,Q\in \mathcal W_{\R^2\setminus \partial\Omega} \big\}
\]
gives the claimed weak mean porosity of $\partial\Omega$ with the functions $\lambda(k) = \varepsilon^{-1} 2^{-10}$ and $\alpha(t) = \varepsilon t$.

 Let $k_0$ be the smallest positive integer for which  $2^{-k_0}< \diam(\Omega)$.
 It suffices to show that $\chi_k^\mathcal{D}(x) = 1$ for all $k \ge k_0$ and 
 $x\in \partial \Omega$.
 Let us fix $k\in \N$, with $k\ge k_0$, and $x\in \partial \Omega$. 
%Denote $\varepsilon\coloneqq \frac{C''}{C}$, {\color{red}where $C''$ is a universal constant to be determined later on. (Let's take $\varepsilon = 2^n$ for some $n \in \mathbb Z$, if it seems to simplify things)}. 
\medskip

{\color{blue}\textsc{Case 1:}}
First, let us suppose that the following condition holds true:
\begin{equation}\label{eq:condition_1}
\text{For every } r\in \left[\frac{2}{3}\,2^{-k},\frac{5}{6}\,2^{-k}\right] 
\text{there exists }y\in \partial B(x,r) \text{ so that }
B(y,\varepsilon 2^{-k+5})\cap \partial \Omega=\emptyset.
\end{equation}
Consider the set of radii 
\[
R \coloneqq \left\{r \,:\, r = \frac232^{-k}+ \varepsilon2^{-k+6}i  \le \frac{5}{6}\,2^{-k}, i \in \N \right\}.
\]
  For each $r \in R$ we select a point $y_r\in \partial B(x,r)$ so that  $B(y_r,\varepsilon 2^{-k+5})\cap \partial \Omega=\emptyset$, as given by \eqref{eq:condition_1}. 
Now, given any $r \in R$, the set $B(y_r,\varepsilon 2^{-k+5}) \subset \R^2 \setminus\partial\Omega$ contains a dyadic square $Q$ of sidelength $\varepsilon 2^{-k+2}$ with distance at least $\varepsilon 2^{-k+2}$ to $\partial\Omega$. Thus, $\partial B(x,r) \cap Q \ne \emptyset$ for some $Q \in \mathcal W_{\R^2\setminus \partial\Omega}$ with $\ell(Q) \ge \varepsilon 2^{-k+2}$.
Since $x \in \partial\Omega$, $\diam(\Omega) > 2^{-k}$ and $\Omega$ is bounded and simply-connected, we have
\[
 \partial B(x,r)\cap \partial \Omega\ne\emptyset,
\]
and so also arbitrarily small cubes in $\mathcal W_{\R^2\setminus \partial\Omega}$ intersect $\partial B(x,r)$.
Consequently, taking into account \eqref{eq:factor2} there exists $Q_r\in \mathcal W_{\R^2\setminus \partial\Omega}$ with $\ell(Q_r) = \varepsilon 2^{-k+2}$ and
\[
 \partial B(x,r)\cap Q_r\ne\emptyset.
\]
By the bound \eqref{eq:W_Qbound}, there exists $Q_r' \in \mathcal W_{Q_r} \subset\mathcal D$ with $\ell(Q_r') =  \varepsilon 2^{-k}$.
Then the collection of cubes $\{Q_r' \,:\, r \in R\} \subset \mathcal D$ is disjointed. A simple calculation shows that we have $\# R \ge 2^{-9}/\varepsilon$.
Thus, $\chi_k^\mathcal{D}(x) = 1$.

% , and by the definition of Whitney squares, there exists $Q'\in \mathcal W_{\R^d\setminus \partial\Omega}$ for which $Q \subset Q'$. 
% we have
% \[
% \sum_{Q\in \mathcal Q_k(x)} \frac{\ell(Q)}{2^{-k}} \ge  \# R\varepsilon \ge 2^{-7}.
% \]

\medskip
{\color{blue}\textsc{Case 2:}}
If the condition \eqref{eq:condition_1} is violated, we argue as follows:
Pick $r\in \big(\frac{2}{3}\,2^{-k},\frac{5}{6}\,2^{-k}\big)$ such that 
for every $y\in \partial B(x,r)$ it holds that 
$B(y,\varepsilon 2^{-k+5})\cap \partial \Omega\neq\emptyset$.
Let $\{y_i\}_{i=1}^m$ be a maximal $\varepsilon2^{-k+5}$-separated net of points in $\partial B(x,r)$ enumerated in a clockwise order around $x$.
Since $B(y_i,\varepsilon 2^{-k+5})\cap \partial \Omega\neq\emptyset$, we can select, for each $i$ a point $w_i \in B(y_i,\varepsilon2^{-k+5}) \setminus \Omega$. 
Let us denote $w_{m+1} = w_1$. We claim that for some $i\in \{1, \dots, m\}$ 
\begin{equation}\label{eq:exitclaim}
\text{any curve connecting }w_i\text{ to }w_{i+1}\text{ in }\R^2 \setminus \Omega \text{ must exit }B(w_i,2^{-k-3}).
\end{equation}
Suppose this is not the case. Then we can connect $w_i$ to $w_{i+1}$ by a curve $\sigma_i$ in $B(w_i,2^{-k-3}) \setminus \Omega$.
The concatenation $\sigma$ of $\sigma_1, \dots, \sigma_m$ is then contained in the annulus 
\[
 B(x,r+2^{-k-3}) \setminus B(x,r-2^{-k-3}) \subset  B(x, 2^{-k}) \setminus B(x,2^{-k-1})
\]
and has winding number $-1$ around $x$. However, since $x\in \partial \Omega$ and $\Omega \setminus B(x, 2^{-k}) \ne \emptyset$, the curve $\sigma$ then disconnects $\Omega$, which is impossible. Thus, we have the existence of $i$ for which \eqref{eq:exitclaim} holds.

%Then, fix $x'\in \Omega \cap B(x,{\color{red}c_1}2^{-k})$. 
%Given that $2^{-k}<\frac{1}{2}{\rm diam}(\Omega)$, we can find a curve 
%$\sigma:[0,1]\to \R^2$ satisfying $\sigma(0)=x'$ and 
%$\sigma(1)\in \R^2\setminus B(x,2^{-k})$. Let $t'\in (0,1)$ be such that 
%$y'\coloneqq\sigma(t')\in \partial B(x,r)$ and
%$\sigma(t)\in \R^2\setminus B(x,r)$ for every $t\in (t', 1]$.
%Since $B(y',\varepsilon 2^{-k})\cap\partial\Omega\neq\emptyset$, 

%{\color{red}we can find points 
%$z_1,z_2\in B(y',\varepsilon 2^{-k})\setminus \Omega$} such that 
%any curve connecting $z_1$ and $z_2$ within $\R^2\setminus\Omega$ is at least of length ${\color{red}c_2}2^{-k}$.

Let $\gamma\colon[0,1]\to \R^2\setminus \Omega$ be a curve connecting $z_1 \coloneqq w_i$ and $z_2 \coloneqq w_{i+1}$ which 
minimizes \eqref{eq:curve_min}. Call 
$A\coloneqq \{z\in \gamma:\,{\rm dist}(z,\partial \Omega)> 5\sqrt{2}\varepsilon 2^{-k+2}\}$ and note that \eqref{eq:curve_condition} yields
\begin{align*}
( 5\sqrt{2}\varepsilon 2^{-k+2})^{1-p}\,\mathcal H^1(\gamma\setminus A)
& \leq \int_{\gamma\setminus A}{\rm dist}(z,\partial \Omega)^{1-p}\,{\rm d}s(z) \\
& \leq \int_{\gamma}{\rm dist}(z,\partial \Omega)^{1-p}\,{\rm d}s(z)
\leq C (\varepsilon 2^{-k+7})^{2-p}.
\end{align*}
Consequently, by the choice of $\varepsilon$, we have that 
\[
\mathcal H^1(\gamma\setminus A)\leq  2^{5(2-p)+2}(5\sqrt{2})^{p-1} \varepsilon C  2^{-k} \le 2^{10} \varepsilon C  2^{-k} \le 2^{-k-4} 
\]
 and hence
\begin{equation}\label{eq:H1_of_A}
\mathcal H^1(A\cap B(w_i,2^{-k-3}))=\mathcal H^1(\gamma \cap B(w_i,2^{-k-3}))-\mathcal H^1(\gamma\setminus A)
\geq 2^{-k-3}-2^{-k-4}
\ge 2^{-k-4}.
\end{equation}
%Suppose that 
%{\color{red}
%\begin{equation}\label{eq:assumption}
%\mathcal H^1(\gamma\cap Q)\lesssim \ell(Q)\, 
%\text{ holds for every }Q\in \mathcal Q_k(\gamma)
%\%coloneqq \{Q\in \mathcal Q_k(x):\, Q\cap \gamma\neq \emptyset\}.
%\end{equation}
%}

Now, notice that by the choice of the radius $r$, the point $w_i$ and the factor $\varepsilon$, we get
\begin{equation}\label{eq:spaceinannulus}
 \dist(\R^2\setminus A_k(x), B(w_i,2^{-k-3})) \ge \frac162^{-k} - \varepsilon2^{-k+5}-2^{-k-3} \ge 2^{-k-6}.
\end{equation}
Write 
\[
 \mathcal Q \coloneqq \{Q \in \mathcal{W}_{\R^2 \setminus \overline{\Omega}} \,:\, \ell(Q) \ge \varepsilon 2^{-k+2} \text{ and } Q \cap B(w_i,2^{-k-3}) \ne \emptyset\}.
\]
Suppose first that there exists $Q \in \mathcal Q$ with $\ell(Q) \ge 2^{-k-7}$. Then, by the definition of the decomposition $\mathcal W_{Q}$ and by \eqref{eq:spaceinannulus} a square $Q' \in \mathcal W_{Q}$ with $\ell(Q') = \varepsilon 2^{-k}$ that is closest to $w_i$ satisfies
\begin{align*}
\dist(\R^2\setminus A_k(x), Q') & \ge \dist(\R^2\setminus A_k(x), B(w_i,2^{-k-3})) - \sqrt{2}\,\dist(Q',\partial Q) - \diam(Q')\\
& \ge 2^{-k-6} - \sqrt{2}\ell(Q') - \sqrt{2}\ell(Q') \ge 2^{-k-6} - \varepsilon2^{-k+2}.
\end{align*}
Therefore, by counting the consecutive squares of side-length $\varepsilon 2^{-k}$ in $\mathcal W_{Q}$ starting from this square, we obtain the estimate
%by \eqref{eq:spaceinannulus} we have
\[
 \# \left\{Q' \in \mathcal D\,:\, Q' \in \mathcal W_{Q}, Q' \subset A_k(x)\text{ and }\ell(Q') = \varepsilon 2^{-k}\right\} \ge \frac{2^{-k-7}}{\varepsilon 2^{-k}} \ge \frac{2^{-7}}{\varepsilon} 
\]
and thus, $\chi_k^\mathcal{D}(x) = 1$.

Suppose then that for all $Q \in \mathcal Q$ we have $\ell(Q) \le 2^{-k-7}$.
Then, by \eqref{eq:spaceinannulus} for all $Q \in \mathcal Q$ we have $Q \subset A_k(x)$.
Notice that by \eqref{eq:Whitney_property},
$A$ is contained in the closure of the union of Whitney cubes $Q \in \mathcal W_{\R^2 \setminus \overline{\Omega}}$
with $\ell(Q) \ge \varepsilon 2^{-k+2}$ and that $\mathcal{H}^{1}$-almost every point in $\R^2$ is contained in the closure of at most two $Q \in \mathcal Q$. Therefore, by
using Lemma \ref{lma:shortcuts} and \eqref{eq:H1_of_A} we get
\begin{equation}\label{eq:manucubes}
 \sum_{Q\in\mathcal Q}\ell(Q)
\ge \frac{1}{10}\sum_{Q\in\mathcal Q}
\mathcal H^{1}(\gamma\cap \overline{Q})\geq \frac{1}{20} \mathcal H^1(A\cap B(w_i,2^{-k-3}))
\geq  2^{-k-9}.
\end{equation}
So, by \eqref{eq:W_Qbound}
\[
 \# \left\{Q' \in \mathcal D\,:\, Q' \subset A_k(x)\text{ and }\ell(Q') = \varepsilon 2^{-k}\right\} \ge \sum_{Q\in\mathcal Q}\frac{\ell(Q)}{\varepsilon 2^{-k+1}} \ge \varepsilon^{-1} 2^{-10}.
\]
Again, $\chi_k^\mathcal{D}(x) = 1$, concluding the proof.
\end{proof}

\section{Examples}\label{sec:examples}

In this section we show the sharpness of our estimate between the constant in the curve condition and the dimension of the boundary. We also show that the dimension estimate via the John condition is necessarily less sharp. Let us write the conclusions from the two sets of examples we consider in the following theorem.

 \begin{theorem}\label{thm:examples}
The following sets exist.
\begin{enumerate}
 \item For every $J \in (0,1/2)$ there exists a Jordan $J$-John domain $\Omega \subset \R^2$ for which
\[
 \dim_\mathcal{H}(\partial\Omega)  \ge 2 - \frac2{\log(2)} J.
\]
\item For every $p\in (1,2)$ and $C \in (72/(2-p),\infty)$ there exists a Jordan domain $\Omega \subset \R^2$ satisfying the curve condition \eqref{eq:curve_condition} with the constant $C$ and exponent $p$,  for which
\[
 \dim_\mathcal{H}(\partial\Omega)  \ge 2 - \frac{24}{\log(2)(2-p)C}.
\]
\item There exists a universal constant $c>0$ so that for every $p\in (1,2)$ and $C \in (c,\infty)$ there exists  a Jordan domain $\Omega \subset \R^2$ satisfying the curve condition \eqref{eq:curve_condition} with the constant $C$, but failing to be J-John domain for any 
\[
 J \ge c\left((2-p)C\right)^{\frac{1}{p-2}}.
\]
\end{enumerate}
\end{theorem}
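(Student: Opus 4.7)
My strategy is to establish all three parts from a single parametric family of Jordan domains $\{\Omega_{N,r,\rho}\}\subset\R^2$, indexed by an integer branching factor $N\ge 2$, a scale $r\in(0,N^{-1/2})$, and an aspect ratio $\rho\ge 1$. The construction is iterative: start from the unit square, and in every step, for each straight boundary segment of length $\ell$ of the current domain, carve $N$ disjoint rectangular channels of width $\asymp r\ell$ and depth $\asymp \rho r\ell$, uniformly spaced along the segment. The channels are placed so as to keep the limit a bounded Jordan domain with simply-connected complement. The limiting $\partial\Omega_{N,r,\rho}$ contains the attractor of an IFS of $N$ similarities with common contraction ratio $r$, so by the open set condition
\[
\dim_{\mathcal H}(\partial\Omega_{N,r,\rho}) \ge \frac{\log N}{\log(1/r)}.
\]
Tuning $Nr^2$ close to $1$ from below drives this lower bound arbitrarily close to $2$.

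For part (1), I set $\rho=1$, $N=2$, and $r=2^{-1/(2-2J/\log 2)}$; the above dimension bound then equals $2-2J/\log 2$. To verify the $J$-John condition, I choose the center of the unit square as John point and take the natural radial path from the tip of any level-$k$ channel through all of its ancestors; since at each scale the channel width equals its depth, a direct computation shows that $\dist(\gamma(t),\partial\Omega_{2,r,1})\ge Jt$ holds with $J$ a universal multiple of $r$, matching the chosen $r$ with the given $J$.

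For part (2), I choose $N$ and $r$ with $Nr^2$ close to $1$ from below (for dimension), together with a large aspect ratio $\rho$ (to boost the curve condition constant). For any two exterior points $z_1,z_2\in \R^2\setminus\Omega_{N,r,\rho}$, the extremal curve $\gamma$ in $\R^2\setminus\Omega_{N,r,\rho}$ threads through the chain of ancestor channels between them; the integrals along successive channels form a geometric series with ratio $r^{2-p}$, yielding
\[
\int_\gamma \dist(z,\partial\Omega_{N,r,\rho})^{1-p}\,\d s \lesssim \frac{\rho}{1-r^{2-p}}\,\|z_1-z_2\|^{2-p}.
\]
So the curve condition holds with $C\asymp \rho/((2-p)\eta)$, where $\eta=(1-r^{2-p})/(2-p)$. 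Matching this with the dimension bound and optimizing $\rho$ and $\eta$ subject to the geometric constraints yields the claimed $\dim_{\mathcal H}(\partial\Omega)\ge 2-24/(\log 2\,(2-p)C)$.

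For part (3), I use the same construction as in (2). The $J$-John constant is forced to satisfy $J\lesssim 1/\rho$: from the tip of any deep channel, the arc length to the center is $\asymp \rho\,r^{k-1}$ while the distance to $\partial\Omega_{N,r,\rho}$ is $\asymp r^k$. Substituting $\rho\asymp((2-p)C)^{1/(2-p)}$ from the parameter matching in (2) gives $J\lesssim((2-p)C)^{-1/(2-p)}=((2-p)C)^{1/(p-2)}$, the required bound. Sharpness follows by self-similarity: any improved $J$ at the root would propagate recursively and contradict the channel aspect at the finest scale.

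The principal obstacle is the precise bookkeeping of constants through the iterative construction so that the universal constants in the claim come out sharp, and in particular the coefficient $24/\log 2$ in part (2) and the exponent $1/(p-2)$ in part (3). A secondary obstacle is verifying at every step that the iterative channel-carving produces a genuine Jordan curve rather than a self-intersecting limit, which amounts to a combinatorial check imposing sufficient gap between neighboring channels.
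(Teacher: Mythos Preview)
Your approach to parts (1) and (2) is plausible but different from the paper's: the paper uses a single-parameter family of von Koch snowflakes with contraction ratio $\lambda\in[1/3,1/2)$, for which $\dim_{\mathcal H}(\partial\Omega)=-\log 4/\log\lambda$, the optimal John constant is exactly $(1/2-\lambda)/\lambda$, and the curve condition holds with $C\asymp\big((2-p)(1/2-\lambda)\big)^{-1}$. Your iterated rectangular-channel domains are in the same self-similar spirit and could in principle be made to work for (1) and (2), though your sketch leaves the constant bookkeeping and the Jordan-curve verification open (and your stated reason in (1), that the John constant is ``a universal multiple of $r$'', is not the right mechanism: what you actually need and get is that with $\rho=1$ the John constant is bounded below by a universal constant, which suffices since for small $J$ the dimension bound is the only nontrivial requirement).

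The genuine gap is in part (3). Your substitution $\rho\asymp((2-p)C)^{1/(2-p)}$ contradicts your own estimate from part (2), namely $C\asymp\rho/((2-p)\eta)$ with $\eta=(1-r^{2-p})/(2-p)$ uniformly bounded. That estimate gives $\rho\asymp(2-p)C$, hence only $J\lesssim1/\rho\asymp\big((2-p)C\big)^{-1}$, which for $(2-p)C$ large is strictly weaker than the required $J\lesssim\big((2-p)C\big)^{1/(p-2)}=\big((2-p)C\big)^{-1/(2-p)}$.

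This is not a bookkeeping slip but a structural obstruction. In any channel of constant width $w$ and depth $\rho w$, the curve integral in the complement picks up $\asymp\rho w\cdot w^{1-p}=\rho\,w^{2-p}$, so the curve constant depends on $\rho$ only linearly. The paper instead proves (3) with a single non-fractal example: a square with a cone of opening $\varepsilon$ attached. There the complementary region is a wedge that \emph{widens linearly}, so along the optimal curve one has $\dist(z,\partial\Omega)\asymp t$ and
\[
\int_0^{\|z_1-z_2\|/\varepsilon} t^{1-p}\,\d t\ \asymp\ \frac{\varepsilon^{p-2}}{2-p}\,\|z_1-z_2\|^{2-p}.
\]
This power dependence $C\asymp\varepsilon^{p-2}/(2-p)$ is precisely what yields $\varepsilon\asymp((2-p)C)^{1/(p-2)}$, and since the cone forces $J\le\varepsilon$, part (3) follows. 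A family with constant-width channels cannot reproduce this; to get the correct exponent you would have to let the channels taper, at which point you have essentially rebuilt the cone example. In short, parts (1)--(2) and part (3) call for genuinely different geometries, and the attempt to cover all three with one self-similar family is what breaks your argument.
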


Recall that the quasi-convexity of the complement of a domain $\Omega\subset \R^2$, and thus in particular the curve condition \eqref{eq:curve_condition}, implies that $\Omega$ is John, \cite{NV1991}. However, the curve condition \eqref{eq:curve_condition} does not imply that the complementary open set $\R^2 \setminus \overline{\Omega}$ would be even connected. In particular, the complementary domain does not have to be a John domain in the Jordan domain case.

In the rest of the section we prove the existence of the sets mentioned in Theorem \ref{thm:examples}.

%{\color{blue}Write what is the implication from the curve constant to John. Do the cones give the sharp example here? Check \cite{NV1991} Theorem 2.18 and Theorem 4.5}

\subsection{Cones}

The first set of examples shows the claim (3) in Theorem \ref{thm:examples}.  We consider a fixed square and on top of it attach a cone whose width is the parameter $\varepsilon$ that we vary in order to change the constants in the curve condition \eqref{eq:curve_condition} and the John condition.

\begin{example}\label{ex:1}
Let $ \varepsilon \in (0,1/2)$ and $1<p<2$. Let
\[\Omega \coloneqq \{(x^1,x^2) : |x^1|<1, |x^2+1|<1 \} \cup \{ (x^1,x^2) : |x^1| < (1-x^2) \varepsilon, x^2 \geq 0 \} \subset \mathbb{R}^2.\]
Then the following hold.
\begin{enumerate}
  \item[(i)]   For $z_1,z_2 \in \mathbb{R}^2 \setminus \Omega$ the curve condition \eqref{eq:curve_condition} holds with constant $C = \frac{c}{2-p}\varepsilon^{p-2}$ with some constant $c>0$ independent of $\varepsilon$.
  \item[(ii)] The set $\Omega$ fails to be $J$-John for any $J > \varepsilon$.
\end{enumerate}
\end{example}
\begin{proof}[Proof of (i)]

\begin{figure}
    \centering
    \psfrag{w1}{$w_1$}
    \psfrag{w2}{$w_2$}
    \psfrag{z1}{$z_1$}
    \psfrag{z2}{$z_2$}
    \psfrag{0}{$x$}%{$(0,y)$}
    \psfrag{gt}{$\gamma(t)$}
    \includegraphics[width=0.7\columnwidth]{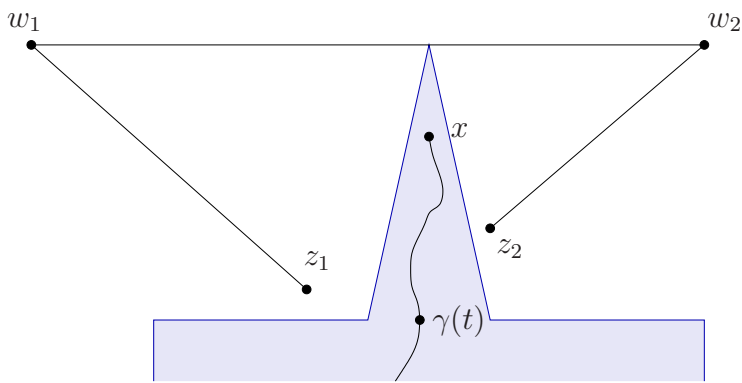}
    \caption{The failure of the John condition for $J > \varepsilon$ in Example \ref{ex:1} is seen by taking the point $x$ near the tip of the cone. Then every curve $\gamma$ connecting $x$ to a John center will fail the condition at a point $\gamma(t)$. The critical case for the curve condition \eqref{eq:curve_condition} is the case where $z_1$ and $z_2$ are on the opposite sides of the cone. Up to a constant, an optimal way to connect them goes through the points $w_1$ and $w_2$.}
    \label{fig:cone}
\end{figure}

Notice first that for $\Omega' \coloneqq \Omega \cup (0,1)\times (0,1)$
there exists a constant $C>0$ independent of $\varepsilon$ so that $\Omega'$ satisfies \eqref{eq:curve_condition} with this $C$.
Write $z_i = (z_i^1,z_i^2)$.
 Thus, we may assume that $-1 \le z_1^1 \le 0 \le z_2^1 \le 1$ and $0 \le z_1^2, z_2^2\le 1$.

 Let us define
 $w_1 = (z_1^1+z_1^2-1,1)$ and $w_2 = (z_2^1-z_2^2+1,1)$. We claim that the concatenation $\gamma$ of the line-segments $[z_1,w_1]$, $[w_1,w_2]$ and $[w_2,z_2]$ satisfies the curve condition with the claimed constant. See Figure \ref{fig:cone} for an illustration of the curve. For the lengths of the line-segments we have the estimates
 \[
  \|w_i-z_i\| = \sqrt{2}|z_i^2-1| \le \frac{\sqrt{2}}{\varepsilon}|z_i^1| \le \frac{\sqrt{2}}{\varepsilon}\|z_1-z_2\|
 \]
 and
 \begin{align*}
  \|w_1-w_2\| &  = |(z_1^1+z_1^2-1) - (z_2^1-z_2^2+1)| \\
  & \le |z_1^2-1| + |z_2^2-1| + |z_1^1-z_2^1| \le \frac{3}{\varepsilon}\|z_1-z_2\|. 
 \end{align*}
 Thus, we get
 \[
  \int_{[z_i,w_i]}\dist(z,\partial\Omega)^{1-p}\,\d s(z) \le 
  \int_0^{\frac{\sqrt{2}}{\varepsilon}\|z_1-z_2\|}\left(\frac{t}{\sqrt{2}}\right)^{1-p}\,\d t
  = \frac{2^{3/2-p}}{2-p}\varepsilon^{2-p}\|z_1-z_2\|^{2-p}
 \]
 and
 \[
  \int_{[w_1,w_2]}\dist(z,\partial\Omega)^{1-p}\,\d s(z) \le 
  2\int_0^{\frac{3}{\varepsilon}\|z_1-z_2\|}\left(\frac{t}{\sqrt{2}}\right)^{1-p}\,\d t
  = \frac{2^{(3-p)/2}3^{2-p}}{2-p}\varepsilon^{2-p}\|z_1-z_2\|^{2-p}.
 \]
 Combining the above estimates, the claim is proven.
\end{proof}

\begin{proof}[Proof of (ii)]
Figure \ref{fig:cone} shows the idea of the proof.
 Suppose $\Omega$ is a $J$-John domain with the John center $x_0 =(x_0^1,x_0^2) \in \Omega$. For $x_0^2 < x^2 <1$, consider a John curve $\gamma\colon [0,\ell(\gamma)] \to \Omega$ from $(0,x^2)$ to $(x_0^1,x_0^2)$. Let $t \in [0,\ell(\gamma)]$ be such that $\gamma(t) \in \R \times \{\max(0,x_0^2)\}$.
 Then,
 \[
  Jt \le \dist(\gamma(t),\partial\Omega) \le \varepsilon\min(1-x_0^2,1)
  \le \varepsilon \min\left(\frac{1-x_0^2}{x^2-x_0^2},\frac{1}{x^2-x_0^2}\right)t
  \le \varepsilon \frac{1-x_0^2}{x^2-x_0^2} t.
 \]
 Thus, by letting $x^2 \nearrow 1$, we see that $J \le \varepsilon$. 
\end{proof}

\subsection{Koch snowflakes}

The second set of examples showing the claims (1) and (2) in Theorem \ref{thm:examples} is the von Koch snowflake with varying contraction constant $\lambda$ as the parameter.

\begin{example}\label{ex:2}
Let us first recall the construction of the von Koch curve $K$ with parameter $\lambda \in [1/3,1/2)$.
It is defined as the attractor of iterated function system $\{ F_1, F_2,F_3,F_4 \}$, where $F_1,\ldots, F_4$ are the similitude mappings
\begin{align*}
F_1x &= Sx, \qquad%\\ %= \lambda x  \\
F_2 x   = T_{(\lambda,0)} R_\theta S x,\qquad %\\ %= \left( ( 1/2- \lambda ) x_1 - h x_2 + \lambda , h x_1 + (1/2- \lambda) x_2\right) \\
F_3 x  = T_{(1/2,h)} R_{- \theta} S x, \qquad %\\%= \left( (1/2-  \lambda) x_1 + h x_2 +1/2 , - h x_1 +(1/2 - \lambda ) x_2 + h \right) \\
F_4 x = T_{(1-\lambda,0)} S x.% = \lambda x +  (1-\lambda , 0).
\end{align*}
Here $Sx = \lambda x$ is the scaling by $\lambda$, $R_{\tau}$
%= \begin{bmatrix}\cos \theta &  \mp \sin \theta \\ \pm \sin \theta & \cos \theta \end{bmatrix}$ 
is the rotation of the plane by the angle $\tau$, the used rotation angle $\theta$ here is defined by $\cos \theta = (\frac{1}{2}-\lambda)/{\lambda}$, $T_a$ is the translation $T_a x = x+a$, and $h = \sqrt{\lambda - 1/4}$.
Recall that $K$ being the attractor means that it is the unique non-empty compact set satisfying
\[
 K = \bigcup_{i=1}^4 F_i(K).
\]

\begin{figure}
    \centering
    \psfrag{x0}{$x_0$}
    \psfrag{g}{$\gamma$}
    \includegraphics[width=0.6\columnwidth]{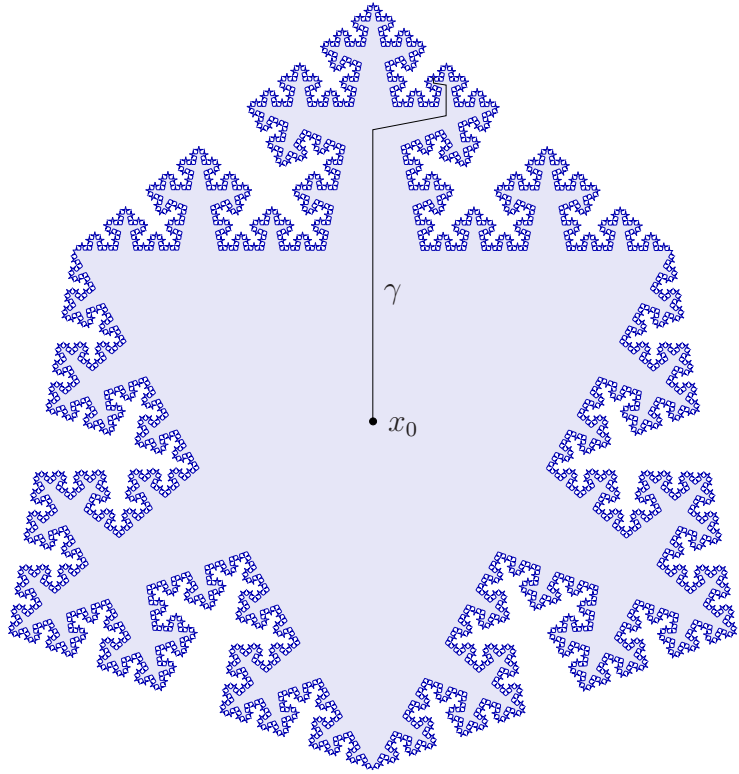}
    \caption{An illustration of the domain $\Omega$ bounded by three copies of a von Koch curve $K$ together with the John center $x_0$ and a John curve $\gamma$.}
    \label{fig:kochJohn}
\end{figure}

We define our domain $\Omega$ to be a snowflake domain whose boundary consists of three copies of $K$, see Figure \ref{fig:kochJohn}. More precisely, $\Omega$ is the bounded component of the set
\[
 \R^2 \setminus \bigcup_{i=1}^3\ G_i(K),
 \]
 where
 \[
  G_1x = x, \qquad G_2x = T_{(1,0)}R_{-\frac{2\pi}{3}}x, \qquad G_3x = T_{(1/2, - \sqrt{3}/2)} R_{\frac{2\pi}{3}} x. 
 \]
 
Since the iterated function system defining $K$ satisfies the open set condition, the Hausdorff dimension agrees with the similarity dimension, which gives
\begin{equation}\label{eq:Kochdim}
\dim_\mathcal{H}(\partial\Omega) = -\frac{\log(4)}{\log(\lambda)} \ge 2 - \frac{4}{\log(2)}\left(\frac12-\lambda\right).
\end{equation}
\medskip
We claim that the following hold.
\begin{enumerate}
 \item[(i)]The domain $\Omega$  is $\frac{\frac{1}{2}-\lambda}{\lambda}$-John, which is also optimal.
 \item[(ii)]The domain $\Omega$ satisfies the curve condition \eqref{eq:curve_condition} with $C = \frac{6\lambda^{2p-3}}{(2-p)(1/2 - \lambda)}$.
\end{enumerate}
\end{example}

Before proving the claims, let us introduce some additional notation for the Koch snowflake.
For $k\in \{0,1,\dots\}$, and a word $a_0a_1\ldots a_k \in \{1, 2, 3\} \times \{1, 2,3, 4\}^k$, we define the composed mapping
\[
F_{a_0\ldots a_k}  \coloneqq G_{a_0} \circ F_{a_1} \circ \cdots \circ F_{a_k} .
\]
Now, we set $K_{a_0\ldots a_k} \coloneqq F_{a_0\dots a_k} ( K)$.
Similarly, by defining $L \coloneqq [0,1]\times \{0\}$, we set
$L_{a_0 \ldots a_k } \coloneqq F_{a_0\ldots a_k} ( L).$
We also fix the following notation
$$\Delta_{a_0 \ldots a_k} = \textrm{ch}( L_{a_0 \ldots a_k 2} \cup L_{a_0 \ldots a_k 3})$$
$$T_{a_0 \ldots a_k } =  L_{a_0 \ldots a_k 2} \cap L_{a_0 \ldots a_k 3},$$
where $\textrm{ch}(A)$ denotes the convex hull of set $A$.

\begin{proof}[Proof of (i)]
%We address the line segments $L_{a_1 \ldots a_k}$ in a similar way as for $\mathcal K$, with the modification that $a_1 \in \{1,2,3\}$ and $L_1=[0,1] \times \{0\}$, $L_2=T_{(1,0)}R_{-\frac{2\pi}{3}} L_1$, and $L_3= T_{(1, - \sqrt{3}/2)} R_{\frac{2\pi}{3} }L_1$.
Let us first show that $\Omega$ cannot be John with a constant better than $\frac{\frac{1}{2}- \lambda}{\lambda}$. The proof is similar to the proof of (ii) in Example \ref{ex:1}. Suppose that $\Omega$ is $J$-John with $x_0 \in \Omega$ the John center.
Let $k \in \mathbb{N}$ be such that 
$$
x_0 \not \in \textrm{ch}(L_{12a_1 \ldots a_k} \cup L_{13 b_1 \ldots b_k}) =:\Delta,
$$
where $a_j = 4$ and $b_j=1$ for all $1 \leq j \leq k$.
Notice that the triangle $\Delta$ is similar to $\Delta_1$ with both having the same top vertex $T=T_1$.
 Let $\gamma$ be a unit speed curve connecting $T$ to $x_0$ in $\Omega \cup \{T\}$.  
Let $x  \in \partial\Omega \cap (L_{12a_1\ldots a_k 4} \cup L_{13b_1 \ldots b_k1})$ and $t \in [0,\ell(\gamma)]$ be such that $\dist(\gamma(t),\partial \Omega) = \|\gamma(t)- x\| > 0$.
%$$\frac{l(\gamma|_{[T , x]})}{ d (x, \partial \Omega)} =\frac{l(\gamma|_{[T , x]})}{ d (x, \tilde x)}.$$
Then,
$$\frac{t}{ \dist(\gamma(t),\partial \Omega)} \geq \frac{\|T-\gamma(t)\|}{ \|\gamma(t) -  x\|}  \geq  \frac{\lambda}{ \frac{1}{2} - \lambda}.$$
Therefore, $J \leq \frac{ \frac{1}{2} - \lambda}{\lambda}$.
\medskip

Let us then show that $\Omega$ is $\frac{\frac{1}{2} - \lambda}{\lambda}$-John. Let $x_0$ be the barycenter of $\Omega$, and let $x_1 \in \Omega$ be the point connected to $x_0$ with $\gamma$. Figure \ref{fig:kochJohn} shows the idea behind the following construction of the John curve $\gamma$. In the case $x_1 \in \Delta _0\coloneqq \textrm{ch} (L_1 \cup L_2 \cup L_3)$ the claim is clear. 
Assume that $x_1 \in \Delta_{a_0 \ldots a_k}$, $k \geq 0$, $a_0 \in \{1,2,3\}$, $a_j \in \{1,2,3,4\}, 1 \leq j \leq k$. Let $P_{a_1 \ldots a_k} \in \Omega$ be the point on the line bisecting $\Delta_{a_0 \ldots a_k}$ through $T_{a_0 \ldots a_k}$, such that $\|T_{a_0 \ldots a_k} -  P_{a_1 \ldots a_k}\| = \frac{\lambda^{k+1}}{2h}$, where $h=\sqrt{\lambda - 1/4}$. 
Now the line segment $[x_1, P_{a_0 \ldots a_k} ]$ has length at most $\frac{\lambda^{k+1}}{ 2h}$ and \eqref{eq:John} holds for all $x \in [x_1, P_{a_1 \ldots a_k}]$ with $J=\frac{\frac{1}{2} - \lambda}{\lambda}$.

By symmetry and self-similarity, %the chain of line segments $S_{a_1 a_2}, S_{a_1 a_2 a_3} , \ldots, S_{a_1a_2\ldots a_k}$ 
the points $P_{a_0} , P_{a_0 a_1}, \ldots , P_{a_0a_1\ldots a_k}$, where $a_0 \in \{1,2,3\}$ and $a_1, \ldots, a_k \in \{1,2,3,4\}$, have the following properties:

For $x = t P_{a_0 \dots a_m } + (1-t) P_{a_0 \ldots a_{m+1}}$, $t \in [0,1]$ and $m \geq 0$
$$\ell([P_{a_0 \ldots a_{m+1}} , x] ) =t \frac{\lambda^{m+1}(1-\lambda)}{2h}$$
and
$$\dist(\partial \Omega, P_{a_0 \ldots a_m}) \geq \frac{(\frac{1}{2}-\lambda) \lambda^{m}}{2h},
$$
which by the construction of $\Omega$ gives
\begin{align*}
\dist(\partial \Omega, x )& \geq (1-t) \frac{(\frac{1}{2}-\lambda) \lambda^{m+1}}{2h} + t \frac{(\frac{1}{2}-\lambda) \lambda^{m}}{2h}\\
& = [(1-t) \lambda +t ] \frac{(\frac{1}{2}- \lambda )\lambda ^{m}}{2h}.
\end{align*}

Therefore, for all $1 \leq m \leq k-1$ and $x \in [P_{a_0 \ldots a_{m+1}}, P_{a_0 \ldots a_m}]$
\begin{align*}
\ell(\gamma|_{x_1 \to x} )& = \ell([x_1, P_{a_0 \ldots a_k}]) + \sum _{j =m+2}^{k} \ell([P_{a_1 \ldots  a_{j-1}}, P_{a_1 \ldots a_j}]) + \ell([P_{a_1 \ldots a_{m+1}} ,x ])\\
& \leq \frac{\lambda^{k+1}}{2h } + \sum_{j=m+2}^k \frac{\lambda ^{j} (1- \lambda)}{2h} + t \frac{\lambda^{m+1}(1-\lambda)}{2h} \\
%&= \frac{\lambda^k}{2h} + \frac{1-\lambda}{2h} \frac{(\lambda^{m+1} - \lambda ^k)}{ 1-\lambda}+ t \frac{\lambda^m(1-\lambda)}{2h} \\
&= [\lambda (1-t) + t ] \frac{\lambda^{m+1}}{2h} \\
& \leq \frac{\lambda}{ \frac{1}{2} - \lambda} \dist(\partial \Omega , x ),
\end{align*}
where  $\gamma|_{x_1 \to x}$ denotes curve made of the line segments $$[x_1, P_{a_0 \ldots a_k }], [P_{a_0 \ldots a_k }, P_{a_1\ldots a_{k-1} }], \ldots, [P_{a_0 \ldots a_{m+2} }, P_{a_0 \ldots a_{m+1} }] , [P_{a_0 \ldots a_{m+1} } , x].$$
So \eqref{eq:John} holds for all $x \in  \gamma|_{x_1 \to P_{a_1}}$ with $J= \frac{\frac{1}{2}-\lambda}{\lambda}$ and \eqref{eq:John} still holds (with the same constant) when $\gamma|_{x_1\to P_{a_1}}$ is extended to $x_0$ with $[P_{a_1},x_0]$.
\end{proof}

\begin{proof}[Proof of (ii)]
 We will show that any two points $z_1,z_2 \in \R^2 \setminus \Omega$ can be connected by a curve $\gamma \subset \R^2 \setminus \Omega$ satisfying \eqref{eq:curve_condition} with $C = \frac{9\lambda^{3p-7}}{(2-p)(\frac12-\lambda)}$. First of all, we may assume without loss of generality that $z_1,z_2 \in \partial \Omega$. Secondly, we may assume that $z_1,z_2 \in K_1$. 
We now divide the proof into three cases, them being case 1: $z_1 \in K_{11}$, $z_2 \in K_{12}$, case 2: $z_1 \in K_{12}$, $z_2 \in K_{13}$, and case 3: $z_1 \in K_{11}$, $z_2 \in K_{13}\cup K_{14}$. Other cases follow then by symmetry, and from self-similarity by zooming in to the construction. We treat only the case 1 in detail, giving the ideas for the other two.
\medskip

\noindent
{\color{blue}\textsc{Case 1}:} $z_1 \in K_{11}$ and $z_2 \in K_{12}$.\\
Let us call $z_1',z_2'$ the orthogonal projections of $z_1$ and $z_2$ on the line-segment 
\[
I \coloneqq T_{(\lambda,0)}R_{(\pi-\theta)/2}L 
\]
 (a line-segment in mirroring $K_{11}$ and $K_{12}$).
We will define points $p_{z_1}$ and $p_{z_2}$ in $I$, that are connected to $z_1$ and $z_2$ by curves, which we will call $\gamma_1$ and $\gamma_2$.  We then join the points $p_{z_1}$ and $p_{z_2}$ with a line-segment. See Figure \ref{fig:kochcurve2} for an illustration.

\begin{figure}
    \centering
    \psfrag{p1}{$p_{z_1}$}
    \psfrag{p2}{$p_{z_2}$}
    \psfrag{z1}{${z_1}$}
    \psfrag{z2}{${z_2}$}
    \psfrag{I}{$I$}
    \includegraphics[width=0.8\columnwidth]{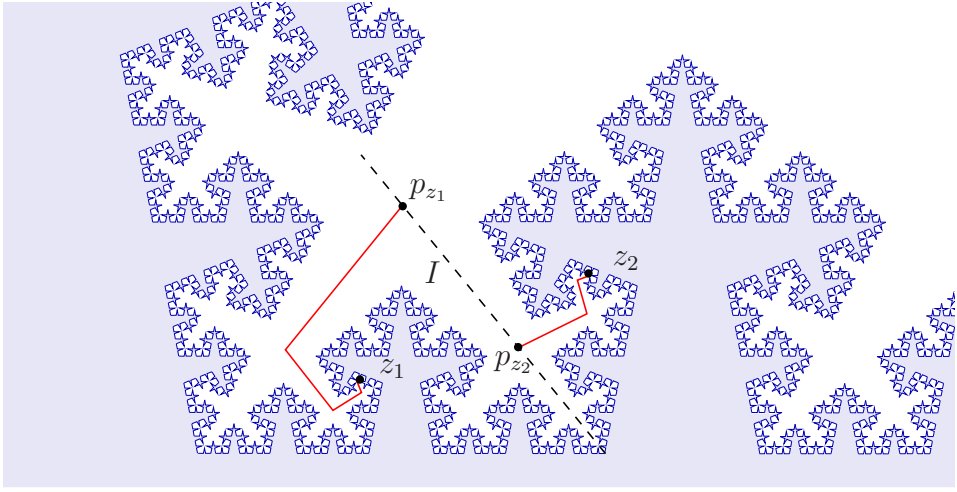}
    \caption{In the proof of the curve condition \eqref{eq:curve_condition}
    we consider three critical cases. Here, in the zoomed in picture of the case 1, the points $z_1$ and $z_2$ are connected to points $p_{z_1}$ and $p_{z_2}$ on a line-segment $I$ using the curves constructed in the proof of claim (i).}
    \label{fig:kochcurve2}
\end{figure}

Let us write $\{o\} \coloneqq  K_{11} \cap K_{12}$. If $z_1 = o$, we take $p_{z_1} = z_1$. If not, then there exists $ k \ge  1$ such that $z_1 \in K_{11a_1\ldots a_k}$ with $a_i = 4$ for all $ i < k$ and $a_k \ne 4$.
We can make a crude estimate
\begin{equation}\label{eq:kochestim1}
\|z_1 - z_1'\| \ge \left(\frac12-\lambda\right)\lambda^{k+2}.
\end{equation}
Now,
by the proof of the (i), $z_1$ can be connected to a point $p_{z_1} \in I$
%of distance less than $\lambda^{k-1}$ to the intersection point $K_{11} \cap K_{12}$, 
by a John curve with John constant $\frac{1/2-\lambda}{\lambda}$ and length less than $\lambda^{k-1}$. Combining this with \eqref{eq:kochestim1}, we get
\begin{align}
\int_{\gamma_1} \dist(z,\partial \Omega)^{1-p}\,\d z &
\le 2 \int_0^{\lambda^{k-1}}\left(\frac{\frac12-\lambda}{\lambda}t\right)^{1-p}\,\d t \nonumber \\
& = \frac2{2-p}\left(\frac{\frac12-\lambda}{\lambda}\right)^{1-p}\left(\lambda^{k-1}\right)^{2-p} \label{eq:kochcase1_1} \\
& \le \frac2{2-p}\frac{\lambda^{3p-7}}{\frac12-\lambda}  \|z_1-z_1'\|^{2-p} \le \frac{C}{3}\|z_1-z_2\|^{2-p}.
\end{align}
By symmetry, with the same arguments we also find $p_{z_2}$ and the curve $\gamma_2$ connecting $z_2$ to $p_{z_2}$, and get
\begin{equation}\label{eq:kochcase1_2}
\int_{\gamma_2} \dist(z,\partial \Omega)^{1-p}\,\d z \le  \frac2{2-p}\frac{\lambda^{3p-7}}{\frac12-\lambda}  \|z_1-z_1'\|^{2-p} \le \frac{C}{3}\|z_1-z_2\|^{2-p}.
\end{equation}
For the line-segment $[p_{z_1},p_{z_2}]$, notice that we have 
\begin{align}
 \|p_{z_1}-p_{z_2}\| & \le \|z_1'-z_2'\| + \|p_{z_1}-z_1'\| + \|p_{z_2}-z_2'\|
 \le \|z_1'-z_2'\| + 2\lambda^{k-1}\\
 & \le \|z_1'-z_2'\| + \lambda^{-3}\left(\frac12-\lambda\right)^{-1}\left(\|z_1 - z_1'\| + \|z_2 - z_2'\|\right)\\
& \le 3\lambda^{-3}\left(\frac12-\lambda\right)^{-1}\|z_1 - z_2\|,
\end{align}
and thus
\begin{align}
\int_{[p_{z_1},p_{z_2}]} \dist(z,\partial \Omega)^{1-p}\,\d z & \le \int_0^{\|p_{z_1}-p_{z_2}\|} \left(\frac{\frac12-\lambda}{\lambda}t\right)^{1-p} \,\d t\\
& \le \left(\frac{\frac12-\lambda}{\lambda}\right)^{1-p}\frac1{2-p}\left(3\lambda^{-3}\left(\frac12-\lambda\right)^{-1}\|z_1 - z_2\|\right)^{2-p} \label{eq:kochcase1_3}  \\
& \le \frac{3^{2-p}}{2-p}\frac{\lambda^{3p-7}}{\frac12-\lambda}  \|z_1-z_2\|^{2-p} \\
& \le \frac{C}{3}\|z_1-z_2\|^{2-p}.
\end{align}
Combining \eqref{eq:kochcase1_1}, \eqref{eq:kochcase1_2}, and \eqref{eq:kochcase1_3}, we conclude the first case.
\medskip

\noindent
{\color{blue}\textsc{Case 2}:} $z_1 \in K_{12}$ and $z_2 \in K_{13}$.\\
In this case, we connect $z_1$ and $z_2$ to the unique point $p \in K_{12}\cap K_{13}$ by curves $\gamma_1$ and $\gamma_2$.
The estimate for $\gamma_1$ and $\gamma_2$ are exactly the same as in case 1. We connect $z_1$ to $p_{z_1}$ with a John curve and then $p_{z_1}$ to $p$ (instead of $z_1'$) with a line-segment.
\medskip

\noindent
{\color{blue}\textsc{Case 3}:} $z_1 \in K_{11}$ and $z_2 \in K_{13}\cup K_{14}$.\\
Similarly as in the second case, we can connect $z_1$ and $z_2$ to the unique point $p \in K_{12}\cap K_{13}$ obtaining the desired estimate also in this case.
\end{proof}

%{\color{red}TO CHECK:
%\begin{itemize}
% \item Use the same symbol only to one thing: is $p$ used, for instance elsewhere than the exponent?
%\end{itemize}
%}

%
%
%
%
%
%
%\bibliographystyle{amsplain}

\end{document}